 \newtheorem{thm}{Theorem}[section]
 \newtheorem{prop}{Proposition}[section]
 \newtheorem{lem}{Lemma}[section]
 \newtheorem{exm}{Example}[section]
 \numberwithin{equation}{section}
\begin{document}
\title{ON GENERALIZED BOHR-SOMMERFELD QUANTIZATION RULES FOR OPERATORS  WITH PT SYMMETRY}
\author{A.IFA\,$^{1}$, N.M'HADHBI\,$^{1,2}$ \& M.ROULEUX\,$^{3}$}
\date{\today}
\maketitle
\centerline{${}^{1}$ Universit\'e de Tunis El-Manar, D\'{e}partement de  Math\'{e}matiques, 1091 Tunis, Tunisia}
\centerline{email: abdelwaheb.ifa@fsm.rnu.tn}\ \\ \ \\
\centerline{${}^2$ Department of Mathematics, College of Sciences and Arts}
\centerline{King Abdulaziz University, Rabigh Campus, P.O. Box 344,  Rabigh 21911, Saudi Arabia}
\centerline{email: nalmhadhbi@kau.edu.sa}\ \\ \ \\
\centerline{${}^3$ Aix Marseille Universit\'e, Centre de Physique  Th\'{e}orique, UMR 7332, 13288 Marseille, France}
\centerline{\& Universit\'{e} de Toulon, CNRS, CPT, UMR 7332, 83957 La  Garde, France}
\centerline{email: rouleux@univ-tln.fr}
\abstract{
We give Bohr-Sommerfeld rules corresponding to quasi-eigenvalues in  the pseudo-spectrum for a 1-D $h$-Pseudodifferential operator
verifying PT symmetry.
}
\section{Introduction and statement of the result}\label{s.introduction}
Let $p(x,\xi;h)$ be a smooth (possibly complex valued) Hamiltonian on  $T^{*}\mathbb{R}$, with the formal expansion
$p(x,\xi;h)\sim p_{0}(x,\xi)+h\,p_{1}(x,\xi)+h^{2}\,p_{2}(x,\xi)+\ldots$. Assume that  for some order function $m$, $p$ belongs to the space of symbols $S^0(m)$, with
\begin{equation}\label{p1}
S^{N}(m)=\{p\in C^{\infty}(T^{*}\,\mathbb{R}): \forall \alpha \in \mathbb{N}^{2}, \exists C_{\alpha}>0, \forall (x,\xi)\in T^{*}\,\mathbb{R};\; |\partial^{\alpha}_{(x,\xi)}p(x,\xi;h)|\leq C_{\alpha} h^{N} m(x,\xi)\}
\end{equation}
[for instance $m(x,\xi)=(1+|\xi|^{2})^{M}$], and that $p+i$ is  elliptic. This allows to take Weyl quantization
$P=p^{w}(x,h\,D_{x};h)$ of $p$
\begin{equation}\label{p2}
P\,u(x;h)=(2\,\pi\,h)^{-1}\,\int\int e^{\frac{i}{h}\,(x-y)\,\eta}\,p(\frac{x+y}{2},\eta;h)\,u(y)\,dy\,d\eta
\end{equation}
which we denote also by $P=Op^w(p)$, or $p=\sigma^w(P)$. We call as usual
$p_{0}$ the principal symbol, $p_{1}$ the sub-principal symbol, and  assume throughout that $p_{0}$ is real.

Fix some compact interval $I=[E_{-},E_{+}], E_{-}<E_{+}$ and assume,  following \cite{5} that there exists a topological ring $\mathcal{A}\subset T^{*}\mathbb{R}$ such that $\partial\,\mathcal{A}={A}_{+}\cup {A}_{-}$
with $A_{\pm}$ a connected component of $p_{0}^{-1}(E_{\pm})$. Assume  also that $p_{0}$
has no critical pont in $\mathcal{A}$, and $A_-$ is included in the  disc bounded by $A_+$
(if this is not the case, we can always change $p$ to $-p$). We define the microlocal well $W$ as the disc bounded by par $A_+$. This includes the case of the standard Hamiltonian  $p_{0}(x,\xi)=\xi^{2}+V(x)$, but allows also for more general  geometries.

For $E\in I$, let $\gamma_{E}\subset W$ be a compact embedded  Lagrangian manifold (periodic orbit) in the energy surface
$\{p_{0}(x,\xi)=E\}$, and for
$N=1,2,\cdots$, let $K_{h}^{N}(E)$ denote the microlocal kernel of $P-E$ up to  order $N$, i.e. the set of local solutions of
$(P-E)\,u=\mathcal{O}(h^{N+1})$ in the distributional sense,
microlocalized on $\gamma_{E}$. This is a smooth complex vector bundle over
$\pi_{x}(\gamma_{E})$, where $\pi_{x}:T^{*}\mathbb{R}\rightarrow \mathbb{R}$. Finding the set of $E=E(h;N)$ such that  $K_{h}^{N}(E)$ contains a global section, amounts to construct
normalized {\it quasi-modes} (QM) $(u(h;N),E(h;N))$ up to order $N$. In other words, the condition that determines the  sequence of
quasi-eigenvalues $E(h;N)=E_n(h;N)$ is that the corresponding  quasi-eigenfunction $u(h;N)=u_n(h;N)$ be single-valued. It is known as Bohr-Sommerfeld condition (BS for short). In the sequel, we drop index $N$ when unnecessary.

Assume that $P$ is self-adjoint, and $E_+<E_0=\displaystyle \liminf_{|x,\xi|\rightarrow \infty}\,p_{0}(x,\xi)$. Then BS determines
asymptotically all eigenvalues of $P$ in $I$, by the equation $\mathcal{S}_{h}(E_{n}(h))=2\,\pi\,n\,h$, to any order $N$. The
semi-classical action $\mathcal{S}_h(E)$ has asymptotics $\mathcal{S}_h(E)\sim S_{0}(E)+h\,S_{1}(E)+h^{2}\,S_{2}(E)+\ldots$. $S_0$ is the classical action
$\displaystyle \oint_{\gamma_{E}}\xi\,dx=\displaystyle \int_{\{p_{0}\leq E\}\cap W}d\xi\wedge \,dx$, $S_{1}(E)=\pi-\displaystyle \int_{\gamma_{E}}p_{1}((x(t),\xi(t))\,dt$
includes Maslov
correction and the subprincipal 1-form $p_1 dt$, where $t$ is the parameter in Hamilton equations.
Terms $S_0$ and $S_1$ are computed using Maslov canonical operator, or more specifically in the
present 1-D case, the monodromy operator (see \cite{10} and references therein). A more systematic
method (still in the 1-D case) is based on functional calculus for $h$-PDO's, in particular Moyal's
product, and uses a general formula due to \cite{15} (see also \cite{1} for earlier work). Thus, with
\begin{equation}\label{p3}
\Delta=\frac{\partial^{2}p_{0}}{\partial x^{2}}\;\frac{\partial^{2} p_{0}}{\partial \xi^{2}}-
(\,\frac{\partial^{2} p_{0}}{\partial x\,\partial \xi}\,)^{2}
\end{equation}
we have
\begin{equation}\label{p4}
S_{2}(E)=\frac{1}{24}\,\frac{d}{dE}\,\int_{\gamma_{E}}\Delta\,dt-
\int_{\gamma_{E}}p_{2}\,dt-\frac{1}{2}\,\frac{d}{dE}
\int_{\gamma_{E}}p_{1}^{2}\,dt
\end{equation}
This method was implemented in different ways by \cite{13}, \cite{5} and given later a diagrammatic
approach by \cite{4} and \cite{9}, providing an algorithm to compute all higher order
terms, in particular the 4th order term can be computed in a closed form without too much work.
Note that all $S_j(E)$ with $j\geq3$ odd vanish.

It is shown further in \cite{5}, using trace formulas, that BS gives actually all eigenvalues in $I$.
Note that this approach, in contrast with the method of the monodromy operator, assumes already
the existence of BS, and the problem is about the most efficient way of computing the $S_j$'s.

In the real analytic case, when $P=-h^{2}\,\Delta+V(x)$ is Schr\"{o}dinger operator, BS can be obtained
using the exact complex WKB method (see \cite{8}, \cite{6} and references therein); it consists first
in transforming the eigenvalue equation $-h^{2}\,u''(x)+V(x)\,u(x)=E\,u(x)$ into a Ricatti equation, and then
compute Jost function whose zeroes are precisely the eigenvalues of $P$.

Consider now a $h$-PDO $P$ (not necessarily self-adjoint) that satisfies PT symmetry i.e. $\mathrm{P}\mathcal{P}\mathcal{T}=\mathcal{P}\mathcal{T} \mathrm{P}$, where $\mathcal{P}\mathcal{T}=^{\checkmark}\mathcal{I}$, $^{\checkmark}$ is the parity operator
$^{\checkmark}u(x)=u(-x)$ and $\mathcal{I}$ the complex conjugation. At the level of Weyl symbol, this symmetry takes the form $p(-x,\xi;h)=\overline{p(x,\xi;h)}$. Such a property is sometimes considered in Physics as a natural substitute for self-adjointness. It is known
that finding quasi-modes is in no ways sufficient to get information about the spectrum of $P$, but
only about its pseudo-spectrum (see \cite{8}, \cite{B} and \cite{14} for more recent results). The pseudo-spectrum
is symmetric with respect to the real axis, and one expects generally to recover some real eigenvalues. We specialize further in the case where $P$ has a real principal symbol. Our main result
is the following:
\begin{thm}\label{Theo1}
Let $P$ as above enjoy PT symmetry, and $p_0$ be real. Then, for
at least $N=4$, there exists $b\in S^{0}(m)$ defined microlocally in $W$, such that $Q=B P B^{-1}, b=\sigma^{w}(B)$,
is formally self-adjoint (at least modulo an operator with symbol in $S^{N+1}(m))$. In particular, there is
a sequence of quasi-modes $(u_{n}(h),E_{n}(h))$ such that $(P-E_{n}(h))\,u_{n}(h)=\mathcal{O}(h^{N+1})$, with $E_{n}(h)\in I$, satisfying  $\mathcal{S}_{h}(E_{n}(h))=2\pi n h$, for an asymptotic series $\mathcal{S}_{h}(E)=\displaystyle \sum_{j=1}^{N}S_{j}(E)\,h^{j}+\mathcal{O}(h^{N+1})$ where $S_{j}\in \mathbb{R}$ are real. In particular, the pseudo-spectrum of $P$ lies within a distance $\mathcal{O}(h^{N+1})$ of $I$.
The coefficients $S_j(E)$ can be computed as in \cite{9} from the symbol of $Q$; thus
\begin{equation*}
S_{0}(E)=\displaystyle \oint_{\gamma_{E}} \xi(x)\,dx=\displaystyle \int\int_{\{p_{0}\leq E\}\cap W}d\xi\wedge dx
\end{equation*}
is the action integral,
\begin{equation*}
S_{1}(E)=\pi-\displaystyle \int_{\gamma_{E}} \mathrm{Re}(p_{1}(x(t),\xi(t))\,dt,
\end{equation*}
and
\begin{equation*}
S_{2}(E)=\displaystyle \frac{1}{24}\,\frac{d}{dE}\,\int_{\gamma_{E}}\Delta\,dt-\displaystyle
\int_{\gamma_{E}}\big(\mathrm{Re}(p_{2})-\displaystyle \frac{1}{2}\,\{\{\beta_{0},p_{0}\},\beta_{0}\}\big)\,dt-\displaystyle \frac{1}{2}\,
\frac{d}{dE}\,\int_{\gamma_{E}}(\mathrm{Re}(p_{1}))^{2}\,dt
\end{equation*}
with $\Delta$ as in (\ref{p3}), and
\begin{equation*}
\beta_{0}(x,\xi)=\displaystyle \oint_{\gamma_{E}}(1-\frac{s}{T(E)})\mathrm{Im}(p_{1})\circ \exp sH_{p_{0}}(x,\xi)ds
\end{equation*}
Denoting by $T(E)$ the period
of the flow on $\gamma_{E}$. Again, $S_{3}=0$, and $S_4(E)$ can be computed using (\cite{4},Formula (7.3)) and the
formula giving $\sigma^{w}(BPB^{-1})$ mod $\mathcal{O}(h^{5})$.
\end{thm}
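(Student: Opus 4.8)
\emph{Proof strategy.} The plan is to look for the conjugator in the form $B=\mathrm{Op}^w(b)$ with $b=e^{-\beta}$ and a \emph{real‑valued} formal symbol $\beta\sim\beta_0+h\beta_1+\cdots$ defined microlocally in $W$; since $b$ is real, $B$ is formally self‑adjoint, and $Q:=BPB^{-1}$ will be formally self‑adjoint (mod $S^{N+1}(m)$) exactly when $\mathrm{Im}\,\sigma^w(Q)=\mathcal{O}(h^{N+1})$. Writing $e^{-\hat\beta}Pe^{\hat\beta}=\sum_{k\ge0}\frac{(-1)^k}{k!}\,\mathrm{ad}_{\hat\beta}^k(P)$ and expanding each iterated commutator by the Moyal calculus, the order‑$h^k$ part of $\mathrm{Im}\,\sigma^w(Q)$ takes the form $\{\beta_{k-1},p_0\}+r_k$, where $r_k$ is a real symbol depending only on $p_0,\dots,p_k$ and $\beta_0,\dots,\beta_{k-2}$ (with $r_1=\mathrm{Im}\,p_1$). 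Hence the requirement $\mathrm{Im}\,\sigma^w(Q)=\mathcal{O}(h^{N+1})$ becomes, order by order, a transport equation along the $p_0$‑flow,
\[
H_{p_0}\beta_{k-1}=-r_k,\qquad k=1,\dots,N,
\]
(the sign depending on the Poisson bracket convention), which is solvable on the periodic orbit $\gamma_E$ precisely when $\oint_{\gamma_E}r_k\,dt=0$ for every $E\in I$.

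The key step, and the point where PT symmetry enters, is to show this average vanishes. Because $p_0$ is real and PT‑symmetric it is even in $x$, so the involution $\iota(x,\xi)=(-x,\xi)$ is anti‑symplectic and preserves $p_0$; hence $\iota$ maps each $\gamma_E$ onto itself while reversing the Hamilton flow. I would argue inductively: assuming $\beta_0,\dots,\beta_{k-2}$ are even in $x$, the truncated symbol $b_{k-1}:=e^{-(\beta_0+\cdots+h^{k-2}\beta_{k-2})}$ satisfies $b_{k-1}(-x,\xi)=\overline{b_{k-1}(x,\xi)}$, so $\mathrm{Op}^w(b_{k-1})$ commutes with $\mathcal{P}\mathcal{T}$; then so does $\mathrm{Op}^w(b_{k-1})P\,\mathrm{Op}^w(b_{k-1})^{-1}$, and therefore the imaginary part of its symbol — in particular $r_k$ — is \emph{odd} in $x$. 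Parametrizing $\gamma_E$ by $\gamma(t)=\exp(tH_{p_0})(m_0)$ and using $\exp(sH_{p_0})\circ\iota=\iota\circ\exp(-sH_{p_0})$ together with $r_k\circ\iota=-r_k$ gives $\oint_{\gamma_E}r_k\,dt=-\oint_{\gamma_E}r_k\,dt$, so the average vanishes and the equation is solvable. I would then take the distinguished single‑valued primitive
\[
\beta_{k-1}(x,\xi)=\oint_{\gamma_E}\Bigl(1-\frac{s}{T(E)}\Bigr)\,r_k\circ\exp(sH_{p_0})(x,\xi)\,ds,
\]
and check (by the same substitution $s\mapsto T(E)-s$, using periodicity of the flow and the vanishing average) that it is again even in $x$, which closes the induction; for $k=1$ this is exactly the $\beta_0$ in the statement. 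Smoothness and the bound needed for $b\in S^{0}(m)$ microlocally in $W$ follow from the absence of critical points of $p_0$ in $\mathcal{A}$ and smoothness of $T(E)$ on $I$. The construction runs to any finite order; I would record it for $N=4$, the order at which $S_3,S_4$ are available in closed form from \cite{4}, \cite{9}.

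Once $Q$ is formally self‑adjoint mod $S^{N+1}(m)$ (hence, after truncating its symbol, genuinely self‑adjoint up to $\mathcal{O}(h^{N+1})$), I would invoke the self‑adjoint Bohr–Sommerfeld theory of \cite{5}, \cite{15}, \cite{9} for $Q$: its real symbol has principal part $p_0$, so one obtains real coefficients $S_j(E)\in\mathbb{R}$ with $S_{2k+1}=0$ for $k\ge1$, a quantization condition $\mathcal{S}_h(E_n(h))=2\pi nh$ with $\mathcal{S}_h=\sum_{j=1}^N S_jh^j+\mathcal{O}(h^{N+1})$, $E_n(h)\in I$, and quasi‑modes $(v_n(h),E_n(h))$ with $(Q-E_n(h))v_n(h)=\mathcal{O}(h^{N+1})$. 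Setting $u_n(h):=B^{-1}v_n(h)$ (renormalized) gives $(P-E_n(h))u_n(h)=B^{-1}(Q-E_n(h))v_n(h)=\mathcal{O}(h^{N+1})$. Feeding the Moyal expansion of $\sigma^w(BPB^{-1})$ into the formulas of \cite{9} then yields the displayed expressions: the order‑$h$ real part of $\sigma^w(Q)$ is $\mathrm{Re}\,p_1$, whence $S_1=\pi-\oint_{\gamma_E}\mathrm{Re}\,p_1\,dt$; the order‑$h^2$ real part produces the extra term $-\tfrac12\{\{\beta_0,p_0\},\beta_0\}$, arising jointly from $\mathrm{ad}_{\hat\beta}(P)$ and $\tfrac12\,\mathrm{ad}_{\hat\beta}^2(P)$ via $H_{p_0}\beta_0=-\mathrm{Im}\,p_1$, giving the stated $S_2$; and $S_4$ comes from \cite[Formula (7.3)]{4} applied to $\sigma^w(Q)$ computed mod $\mathcal{O}(h^5)$.

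Finally, for the pseudo‑spectrum I would use $P-z=B^{-1}(Q-z)B$, so that $\|(P-z)^{-1}\|\le\|B\|\,\|B^{-1}\|\,\|(Q-z)^{-1}\|$ microlocally in $W$; since $Q$ is self‑adjoint modulo $\mathcal{O}(h^{N+1})$ and, by the trace‑formula argument of \cite{5}, its spectrum near $I$ sits at the Bohr–Sommerfeld values up to $\mathcal{O}(h^{N+1})$, the resolvent $(Q-z)^{-1}$ is controlled for $\mathrm{dist}(z,I)\gg h^{N+1}$, hence so is $(P-z)^{-1}$, i.e. the pseudo‑spectrum of $P$ lies within $\mathcal{O}(h^{N+1})$ of $I$. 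I expect the main obstacle to be precisely the solvability identity $\oint_{\gamma_E}r_k\,dt=0$: one must combine PT symmetry (oddness in $x$ of $\mathrm{Im}\,\sigma^w(Q)$) with the anti‑symplectic, flow‑reversing nature of $\iota$ \emph{and}, at the same time, verify that the chosen primitives $\beta_{k-1}$ remain even in $x$, so that the induction does not break; the remaining Moyal bookkeeping for the explicit $S_j$ and the transfer of the resolvent bound through $B$ are routine.
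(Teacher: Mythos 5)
Your proposal is correct and reaches the same conclusion, but it replaces the paper's central device by a more geometric one. The paper first puts the self-adjoint part $\tilde P=\tfrac12(P+P^*)$ into Birkhoff normal form $f(hD_s;h)$ by a PT-commuting FIO, so that everything lives in action--angle variables $(s,\tau)$ where the parity is the circle involution $s\mapsto \pi-s$; the solvability condition $\oint r\,dt=0$ is then checked order by order by writing each $r$ explicitly through Moyal calculus (the Hamilton--Jacobi polynomials $R_5,R_8,F_5,\tilde F_5,G_5,\tilde G_5,\dots$) and observing that it is an odd function on the circle. You instead stay in the original $(x,\xi)$ chart and use directly that $\iota(x,\xi)=(-x,\xi)$ is anti-symplectic, preserves $p_0$, hence reverses the $H_{p_0}$-flow, so that oddness of $r_k$ in $x$ (which you deduce from PT symmetry of the $(k-1)$-st conjugated operator once $\beta_0,\dots,\beta_{k-2}$ are known to be even) forces $\oint_{\gamma_E} r_k\,dt=0$; and you correctly observe that the same vanishing is exactly what makes the canonical primitive $\beta_{k-1}=\oint(1-\tfrac{s}{T})\,r_k\circ\exp(sH_{p_0})\,ds$ even again, closing the induction. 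This avoids BNF altogether and phrases the paper's Lemmas 2.1--2.2 in intrinsic terms, which is cleaner; the paper's BNF route has the advantage of making the parity and the $\beta_j$'s completely explicit on $\mathbb S^1\times\mathbb R$ and of tying in with the functional-calculus approach to the $S_j$'s. Two smaller divergences: you use a single conjugator $B=\mathrm{Op}^w(e^{-\beta})$ with $\beta\sim\sum h^j\beta_j$, whereas the paper composes $B_3B_2B_1B_0$ with $B_j=\mathrm{Op}^w(e^{h^j\beta_j})$ (equivalent after reorganizing the $\beta_j$), and your $b=e^{-\beta}$ carries the opposite sign to the paper's $b_0=e^{\beta_0}$, which you flag. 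What you do not carry out, and what occupies most of the paper's written proof, is the Moyal bookkeeping identifying $r_k$ in closed form for $k\le 4$; you label it routine, and for the \emph{existence} statement that is fair, but the theorem also asserts explicit formulas for $S_2$ and $S_4$ which require that computation. Your final resolvent argument $P-z=B^{-1}(Q-z)B$ for the pseudo-spectrum bound is a sensible way to make precise an assertion the paper leaves implicit.
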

Of course, we conjecture that Theorem \ref{Theo1} holds for all $N$.\ \\
\begin{exm} Consider the operator $Q(x,hD_{x})=(hD_{x})^{2}+p(x)\,hD_{x}+q(x)$ with smooth, real
coefficients. Then $Q$ can be mapped into $P(x,hD_{x})=(hD_{x})^{2}+q(x)-\frac{1}{4}\,(p(x))^{2}+i\,\frac{h}{2}\,p'(x)$ by the unitary
transformation $Q=BPB^{*}$, $Bv(x;h)=\exp(-\frac{i}{2h}\,\int^{x}p(t)\,dt)\,v(x)$. Assume $Q$ verifies PT symmetry,
i.e. $p$ and $q$ are even on $\mathbb{R}$, then the same holds for $P$. The microlocal well $W_{E}=\{(x,\xi)\in T^{*}(\mathbb{R}); \xi^{2}+q(x)-\frac{1}{4}\,(p(x))^{2}\leq E\}$ for $P$ projects onto the potential well $U_{E}=\{x\in \mathbb{R}; q(x)-\frac{1}{4}\,(p(x))^{2}\leq E\}$,
so Theorem \ref{Theo1} holds provided $V(x)=q(x)-\frac{1}{4}\,(p(x))^{2}$ has no critical point in $I$.

If $p$ and $q$ analytic, then the spectrum of $P$ is in fact real in $I$ and given by (exact) BS. In fact,
using also some of the technics elaborated in \cite{11}, \cite{3} showed that if $P=-h^{2}\,\Delta+V(x)+i\,\varepsilon\,W(x)$ is a small perturbation of the self-adjoint Schr\"{o}dinger operator $P_0=-h^{2}\,\Delta+V(x)$, then the semiclassical action is a real analytic function and the roots of BS are real eigenvalues of $P$. This implies
Theorem \ref{Theo1} by choosing $\varepsilon=h$, but the argument of \cite{3} heavily relies upon that particular form of $P$.
\end{exm}
\section{Proof of the Theorem}
Since we know $\mathcal{S}_{h}(E)$ for 1 self-adjoint operator (\cite{13},\cite{15}), it suffices to conjugate $P$ by an elliptic (but
non-unitary) $h$-PDO so that the resulting operator becomes formally self-adjoint up to order $N$. We proceed
in several steps.
\subsection{Birkhoff normal form (BNF)}
Let $\tilde{P}$ be self-adjoint as in (\ref{p3})  with (real) Weyl symbol $\tilde{p}\in S^{0}(m)$, and assume that its principal symbol $\tilde{p}_{0}=p_{0}$ has a periodic orbit $\gamma_{0}$ at non critical energy $E=0$. Then there exists a smooth canonical transformation $(s,\tau)\mapsto \kappa (s,\tau)=(x,\xi)$, $s\in [0,2 \pi]$, defined in a neighborhood of $\gamma_{0}$ and a smooth function $\tau\mapsto f_{0}(\tau)$, $f_{0}(0)=0$, $f'_{0}(0)\neq 0$ such that $p_{0}\circ \kappa(s,\tau)=f_{0}(\tau)$. Energy $E$ and momentum $\tau$ are related by the 1-to-1 transformation $E=f_{0}(\tau)$. This transformation can be quantized to take $\tilde{P}$ in its semi-classical BNF. Namely, there is a microlocally unitary $h$-FIO operator $U$ associated with $\kappa$, and a classical symbol $f(\tau;h)=f_{0}(\tau)+h\,f_{1}(\tau)+h^{2}\,f_{2}(\tau)+\cdots$, such that $U^{*}\tilde{P}U=f(hD_{s};h)$.
See e.g. \cite{2} at the level of the principal symbol, and \cite{12} for the full symbol; BNF turns
out to be convergent in the 1-D case.
In the canonical (action-angle) variables $(s,\tau)$, $s\in [0,2 \pi]$, the parity operator $^{\checkmark}:x\mapsto -x$ on the
real line takes the form $^{\checkmark}:s\mapsto \pi-s$ on the circle. Moreover, we can choose $U$ so that it commutes with PT symmetry: $U \mathcal{P} \mathcal{T}=\mathcal{P} \mathcal{T} U$.
\subsection{The homological equation}
We start with the following elementary result (see e.g. (\cite{14},p.93)):
\begin{lem}\label{L1}
Let $q$ and $p$ be real Hamiltonians. Then the equation
\begin{equation}\label{1}
q+\{\beta,p\}=0
\end{equation}
has a (global) real solution $\beta$ along $\gamma_{E}$ iff
\begin{equation}\label{2}
\oint_{\gamma_{E}}q\circ \exp tH_{p}(\rho)dt=0
\end{equation}
for any $\rho\in \gamma_{E}$. It is given by
\begin{equation}\label{3}
\beta(\rho)=-\oint_{\gamma_{E}}(1-\frac{t}{T(E)})q\circ \exp tH_{p}(\rho)dt
\end{equation}
\end{lem}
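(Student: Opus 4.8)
The plan is to solve the first-order linear PDE \eqref{1} by the method of characteristics along the Hamiltonian flow of $p$. The flow $t\mapsto \exp tH_p(\rho)$ parametrizes $\gamma_E$ with period $T(E)$, and along it the equation $q+\{\beta,p\}=0$ becomes the ODE $\frac{d}{dt}\bigl(\beta\circ\exp tH_p(\rho)\bigr)=\{\beta,p\}\circ\exp tH_p(\rho)=-q\circ\exp tH_p(\rho)$, since $\{\beta,p\}=H_p\beta=\frac{d}{dt}\beta\circ\exp tH_p$. Integrating from $0$ to $t$ gives $\beta\circ\exp tH_p(\rho)=\beta(\rho)-\int_0^t q\circ\exp sH_p(\rho)\,ds$.

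First I would observe that this construction is consistent only if the resulting $\beta$ is single-valued on the closed orbit $\gamma_E$, i.e. $\beta\circ\exp T(E)H_p(\rho)=\beta(\rho)$, which forces $\int_0^{T(E)} q\circ\exp sH_p(\rho)\,ds=0$; conversely, I would check that this vanishing condition is independent of the base point $\rho\in\gamma_E$ (moving $\rho$ along the flow merely cyclically shifts the interval of integration over a full period), so \eqref{2} is well posed. This establishes necessity of \eqref{2}, and shows that when \eqref{2} holds any antiderivative works, so $\beta$ is determined up to an additive constant.

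Next I would verify that the specific choice \eqref{3} is indeed a solution and pin down the constant. Writing $\beta(\rho)=-\oint_{\gamma_E}\bigl(1-\frac{t}{T(E)}\bigr)q\circ\exp tH_p(\rho)\,dt$, I would apply $H_p$ and use that differentiating $q\circ\exp tH_p(\rho)$ in the $\rho$-direction along the flow equals differentiating in $t$; an integration by parts in $t$ on the term $\frac{t}{T(E)}q\circ\exp tH_p(\rho)$ then produces $\{\beta,p\}(\rho)=-q(\rho)+\frac{1}{T(E)}\oint_{\gamma_E}q\circ\exp tH_p(\rho)\,dt$, and the last term vanishes by hypothesis \eqref{2}. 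This proves sufficiency and gives the explicit formula. Reality of $\beta$ is immediate since $q$, $p$, the flow, and $T(E)$ are all real.

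The only genuinely delicate point is the bookkeeping in the integration by parts and the verification that \eqref{3} transforms correctly under the flow — one must be careful that "along $\gamma_E$" means the identity holds pointwise on the whole orbit, and that the weight $1-t/T(E)$ is exactly what kills the boundary term while leaving a closed (periodic) primitive. Everything else is routine: the characteristic ODE is linear and scalar, and the periodicity of the flow on the compact embedded Lagrangian $\gamma_E$ (guaranteed by the non-criticality hypothesis on $p_0$ in the ring $\mathcal{A}$) makes the global statement a matter of matching the two endpoints of one period.
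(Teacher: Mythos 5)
The paper does not actually prove Lemma~\ref{L1}: it is stated with a pointer to (\cite{14}, p.~93), so there is no in-house argument to compare against. Your strategy --- reduce (\ref{1}) to a scalar linear ODE along the Hamiltonian flow, read off (\ref{2}) as the single-valuedness condition after one period, and then verify the particular solution (\ref{3}) by an integration by parts --- is the standard proof and is structurally sound, including the remark that (\ref{2}) is independent of the base point and the observation on uniqueness up to an additive constant.

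There is, however, a sign inconsistency that prevents your verification of (\ref{3}) from actually closing. You set $\{\beta,p\}=H_p\beta$, which gives the characteristic ODE $\tfrac{d}{dt}\big(\beta\circ\exp tH_p(\rho)\big)=-q\circ\exp tH_p(\rho)$. But with that same convention, applying $H_p$ to the candidate (\ref{3}), using $H_p\big[q\circ\exp tH_p\big]=\tfrac{d}{dt}\big[q\circ\exp tH_p\big]$ and integrating by parts, yields
\begin{equation*}
H_p\beta(\rho)=-\int_0^{T(E)}\Big(1-\tfrac{t}{T(E)}\Big)\,\tfrac{d}{dt}\big[q\circ\exp tH_p(\rho)\big]\,dt
= q(\rho)-\frac{1}{T(E)}\oint_{\gamma_E} q\circ\exp tH_p(\rho)\,dt,
\end{equation*}
so under (\ref{2}) one finds $\{\beta,p\}=+q$ and equation (\ref{1}) would read $2q=0$. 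The expression you actually report, $\{\beta,p\}=-q+\frac{1}{T(E)}\oint q$, is correct for (\ref{3}), but it presupposes the opposite identification $H_p\beta=\{p,\beta\}=-\{\beta,p\}$ (i.e.\ $\{f,g\}=H_f\,g$); this is indeed the convention the paper uses implicitly, as one checks by confronting (\ref{6}) with (\ref{7}) via (\ref{3}) with $q=-\mathrm{Im}(p_1)$. So replace ``$\{\beta,p\}=H_p\beta$'' by ``$\{p,\beta\}=H_p\beta$'': the characteristic ODE becomes $\tfrac{d}{dt}\big(\beta\circ\exp tH_p\big)=+q\circ\exp tH_p$, the necessity of (\ref{2}) and the well-posedness in $\rho$ are unchanged, and the integration by parts then produces exactly $\{\beta,p\}=-q+\frac{1}{T(E)}\oint q$, matching (\ref{3}). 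Everything else, in particular the periodicity of the flow on $\gamma_E$ coming from the non-criticality of $p_0$ and the reality of $\beta$, is fine.
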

\begin{lem}\label{L2}
Assume $p=p_0$ as above and $q$ is odd with respect to $\mathcal{P} \mathcal{T}$; then (\ref{2}) holds.
\end{lem}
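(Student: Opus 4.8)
The plan is to reduce the claim to a parity/time-reversal argument about the integral \eqref{2} from Lemma \ref{L1}. First I would make precise what ``odd with respect to $\mathcal{P}\mathcal{T}$'' means for a symbol. Since $\mathcal{P}\mathcal{T}$ acts on symbols by $q(x,\xi)\mapsto\overline{q(-x,\xi)}$, a symbol odd under $\mathcal{P}\mathcal{T}$ satisfies $\overline{q(-x,\xi)}=-q(x,\xi)$; for a real symbol this says exactly that $q$ is odd in the variable $x$, i.e. $q\circ\,^\checkmark=-q$ where $\,^\checkmark(x,\xi)=(-x,\xi)$. The key observation is that because $p=p_0$ has real Weyl symbol and enjoys PT symmetry, $p_0\circ\,^\checkmark=p_0$, so the involution $\,^\checkmark$ preserves each energy surface $\gamma_E$ and conjugates the Hamilton flow to its time-reverse: $\,^\checkmark\circ\exp tH_{p_0}=\exp(-tH_{p_0})\circ\,^\checkmark$. (This is the standard fact that the parity map reverses the sign of $\xi$... more carefully, one checks $(\,^\checkmark)^*H_{p_0}=-H_{p_0}$ from $p_0\circ\,^\checkmark=p_0$ and $d\,^\checkmark$ acting on the symplectic form by a sign.)

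Granting this, the proof of \eqref{2} is a change of variables. Fix $\rho\in\gamma_E$ and set $\rho'=\,^\checkmark\rho\in\gamma_E$. Then
\begin{equation*}
\oint_{\gamma_E} q\circ\exp tH_{p_0}(\rho')\,dt
=\oint_{\gamma_E} q\circ\exp tH_{p_0}\circ\,^\checkmark(\rho)\,dt
=\oint_{\gamma_E} q\circ\,^\checkmark\circ\exp(-tH_{p_0})(\rho)\,dt.
\end{equation*}
Using $q\circ\,^\checkmark=-q$ and then substituting $t\mapsto -t$ (the orbit is periodic of period $T(E)$, so the integral over one period is insensitive to reversing and translating the parameter), the right-hand side equals $-\oint_{\gamma_E}q\circ\exp tH_{p_0}(\rho)\,dt$. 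On the other hand, the integral $\oint_{\gamma_E}q\circ\exp tH_{p_0}(\rho)\,dt$ is the integral of $q$ over the whole periodic orbit and is therefore independent of the starting point $\rho$; hence its value at $\rho'$ equals its value at $\rho$. Combining, that common value is its own negative, so it vanishes, which is precisely \eqref{2}. By Lemma \ref{L1} the homological equation $q+\{\beta,p_0\}=0$ then has a global real solution $\beta$ along $\gamma_E$, given by \eqref{3}; in fact one can note that \eqref{3} automatically produces a $\beta$ that is \emph{even} under $\mathcal{P}\mathcal{T}$, which will matter for the iteration in the next step.

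The only genuinely delicate point is the claim that $\,^\checkmark$ conjugates the flow to its reverse and that this is compatible with the action-angle coordinates; I would verify it directly in the $(s,\tau)$ variables, where $p_0=f_0(\tau)$, the flow is $s\mapsto s+tf_0'(\tau)$, and $\,^\checkmark$ is $s\mapsto\pi-s$ as recorded at the end of Section~2.1 — then $\exp tH_{p_0}$ sends $(s,\tau)$ to $(s+tf_0'(\tau),\tau)$ while $\,^\checkmark$ sends it to $(\pi-s,\tau)$, and one checks immediately that $\,^\checkmark\exp tH_{p_0}=\exp(-tH_{p_0})\,^\checkmark$. In these coordinates the period is constant along $\gamma_E$ and the substitution $t\mapsto-t$ followed by a shift is transparent, so the argument above becomes a one-line computation; the abstract symplectic statement is then just a coordinate-free repackaging. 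I expect no obstacle beyond bookkeeping, the main subtlety being to keep straight that ``odd under $\mathcal{P}\mathcal{T}$'' for a \emph{real} symbol is oddness in $x$ alone (not in $(x,\xi)$), which is what makes the sign work out.
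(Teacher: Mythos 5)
Your proof is correct and takes essentially the same approach as the paper: both arguments ultimately rest on the observation that, in action-angle coordinates, the orbit integral becomes the integral over a full period of a function that is odd on the circle and hence vanishes. Your coordinate-free framing via the anti-symplectic involution $(x,\xi)\mapsto(-x,\xi)$ and the time-reversal $t\mapsto -t$ is a clean repackaging of the paper's direct substitution in $\int_s^{s+2\pi}q(s',\tau)\,ds'$, and you correctly reduce it to the identity $\,^\checkmark\exp tH_{p_0}=\exp(-tH_{p_0})\,^\checkmark$ with $\,^\checkmark:s\mapsto\pi-s$.
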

\begin{proof}
Using action-angle coordinates $(s,\tau)$ we have $p_{0}(s,\tau)=f_{0}(\tau)$, hence $H_{p_{0}}(t,\tau)=f'_{0}(\tau)\,\displaystyle \frac{\partial}{\partial t}$, where $f'_{0}(\tau)=\omega(E)$ is the energy dependent frequency.\ \\
For $\rho=(s,\tau)$, $\exp tH_{p_{0}}(\rho)=\phi_{t}(\rho)=(s+\omega(E)\,t,\tau)$. Then, using the periodicity of $q$
\begin{equation*}
\oint_{\gamma_{E}}q\circ \exp tH_{p}(\rho)dt=\int_{0}^{T(E)}q(s+\omega(E)t,\tau)dt=\frac{1}{\omega(E)}\int_{s}^{s+2\pi}q(s',\tau)ds'
\end{equation*}
which is 0 since $q(.,\tau)$ is odd as a function on the circle.
\end{proof}
\subsection{Reducing to a formally self-adjoint operator}
\begin{prop}
Let $p(x,\xi;h)\sim p_{0}(x,\xi)+h p_{1}(x,\xi)+h^{2} p_{2}(x,\xi)+\cdots \in S^{0}(m)$ satisfy PT symmetry with real $p_{0}$. Then at least for $N=4$, there exists $b\in S^{0}(m)$ elliptic such that Weyl symbol of $BPB^{-1}$, $\sigma^{W}(B)$, is real mod $O(h^{N+1})$. Moreover, $BPB^{-1}$ is again PT-symmetric (up to that order).
\end{prop}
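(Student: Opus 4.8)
The plan is to take $B = e^{G}$ with $G = Op^{w}(g)$, where the Weyl symbol $g \sim g_{0} + h g_{1} + \cdots + h^{N-1} g_{N-1}$ is real-valued and even in $x$, constructed recursively in powers of $h$. This ansatz is convenient for three reasons: $e^{G}$ is elliptic for any $g$; since $\mathcal{PT}\,Op^{w}(a)\,\mathcal{PT} = Op^{w}(\overline{a(-x,\xi)})$ and $\mathcal{PT}^{2} = I$, a real symbol even in $x$ quantizes to an operator commuting with $\mathcal{PT}$, so $e^{G}$ commutes with $\mathcal{PT}$ and $Q := BPB^{-1} = e^{G}Pe^{-G}$ is PT-symmetric for \emph{every} admissible $g$ (hence the last clause of the Proposition is free); and the conjugation has the symbol-level expansion $\sigma^{w}(Q) = \sum_{n\ge0}\frac{1}{n!}(\mathrm{ad}^{\#}_{g})^{n}p$, with $\mathrm{ad}^{\#}_{g}a = g\#a - a\#g = \tfrac{h}{i}\{g,a\} + \mathcal{O}(h^{3})$ (the $\mathcal{O}(h^{2})$ Moyal term cancelling by symmetry), which in particular leaves the principal symbol $p_{0}$, and hence $S_{0}$, untouched. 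Everything then reduces to choosing $g$ so that $\mathrm{Im}\,\sigma^{w}(Q) = \mathcal{O}(h^{N+1})$.

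PT-symmetry together with $p_{0}$ real forces $\mathrm{Re}\,p_{j}$ even and $\mathrm{Im}\,p_{j}$ odd in $x$. I would argue by induction on $k = 1,\dots,N$: suppose $g_{0},\dots,g_{k-2}$ (real, even in $x$) have been fixed so that, writing $G^{(k-2)} = Op^{w}(\sum_{j\le k-2}h^{j}g_{j})$ and $Q^{(k-2)} = e^{G^{(k-2)}}Pe^{-G^{(k-2)}}$, one has $\mathrm{Im}\,\sigma^{w}(Q^{(k-2)}) = h^{k}r_{k} + \mathcal{O}(h^{k+1})$. Since $Q^{(k-2)}$ is PT-symmetric, $\sigma^{w}(Q^{(k-2)})(-x,\xi) = \overline{\sigma^{w}(Q^{(k-2)})(x,\xi)}$, so $r_{k}$ is real and odd in $x$, i.e. odd with respect to $\mathcal{PT}$. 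Adding $h^{k-1}Op^{w}(g_{k-1})$ to $G^{(k-2)}$ changes $\sigma^{w}(Q)$ by $\tfrac{h^{k}}{i}\{g_{k-1},p_{0}\} + \mathcal{O}(h^{k+1})$ — the only new contribution at order $h^{k}$, because the base symbol starts at order $h^{0}$ so each additional $\mathrm{ad}^{\#}$ raises the order — i.e. it changes $\mathrm{Im}\,\sigma^{w}(Q)$ at order $h^{k}$ by $-h^{k}\{g_{k-1},p_{0}\}$. So I must solve the homological equation $\{g_{k-1},p_{0}\} = r_{k}$; by Lemma~\ref{L1} this has a global real solution along $\gamma_{E}$ iff $\oint_{\gamma_{E}}r_{k}\circ\exp tH_{p_{0}}\,dt = 0$, which holds by Lemma~\ref{L2} precisely because $r_{k}$ is $\mathcal{PT}$-odd and $p_{0}$ has the required form. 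The solution is given by (\ref{3}), and, using $\sigma\circ\exp tH_{p_{0}} = \exp(-tH_{p_{0}})\circ\sigma$ with $\sigma(x,\xi) = (-x,\xi)$ together with the solvability identity, one checks it is again even in $x$, so the induction continues. For $k = 1$ the equation is $\{g_{0},p_{0}\} = \mathrm{Im}\,p_{1}$, with solution $g_{0} = \beta_{0}$.

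Running $k = 1,\dots,N$ then produces $g$ and thus $b = \sigma^{w}(e^{G}) \in S^{0}(m)$, elliptic and microlocally defined in $W$ (extended by $1$ outside), with $\sigma^{w}(BPB^{-1})$ real modulo $\mathcal{O}(h^{N+1})$ and $BPB^{-1}$ still PT-symmetric, which is the Proposition. The low-order coefficients read off from the Moyal series are $q_{0} = p_{0}$, $q_{1} = \mathrm{Re}\,p_{1}$, and, using $\mathrm{Im}\,p_{1} = \{\beta_{0},p_{0}\}$, $q_{2} = \mathrm{Re}\,p_{2} + \tfrac12\{\beta_{0},\{\beta_{0},p_{0}\}\} = \mathrm{Re}\,p_{2} - \tfrac12\{\{\beta_{0},p_{0}\},\beta_{0}\}$; substituting $q_{0},q_{1},q_{2}$ for $p_{0},p_{1},p_{2}$ in the self-adjoint formulas around (\ref{p4}) gives the $S_{1},S_{2}$ of Theorem~\ref{Theo1}, with $S_{3} = 0$ as before.

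The step I expect to be the real work is the parity bookkeeping in the induction: at every order one needs the obstruction $r_{k}$ to be $\mathcal{PT}$-odd, otherwise the solvability condition of Lemma~\ref{L1} could fail. The device is to read this parity not off a direct computation of $r_{k}$ — tedious because of the nested Moyal brackets — but off the PT-symmetry of the intermediate operator $Q^{(k-2)}$, which is in turn available only because we insist throughout that the correctors $g_{j}$ be real and even in $x$; a complex or odd part would destroy it. Since the scheme then runs order by order with no new obstruction, I read the ``$N = 4$'' in the statement as reflecting only the availability of ready-made closed-form expressions for the $S_{j}$ with $j \le 4$ (as in \cite{4}), consistent with the conjecture that the result holds for all $N$.
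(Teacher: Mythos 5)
Your argument is correct and runs along the same overall lines as the paper's --- remove the imaginary parts of the full symbol order by order by conjugating with an elliptic PT-commuting operator, reducing each order to the homological equation of Lemma~\ref{L1} and invoking the parity criterion of Lemma~\ref{L2} for its solvability. There is, however, a genuine structural difference in how you obtain the parity needed at each step. The paper conjugates by a product $B_3B_2B_1B_0$, $\sigma^w(B_j)=e^{h^j\beta_j}$, and at each order verifies the compatibility condition~(\ref{2}) \emph{term by term}, by expanding the Moyal product, passing to action-angle variables, and checking by hand that each nested bracket and each Hamilton--Jacobi polynomial ($R_5$, $R_8$, $F_5$, $\tilde F_5$, $G_5$, $\tilde G_5$, \dots) integrates to zero on the circle. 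You instead take a single $B=e^{Op^w(g)}$ with $g=\sum h^jg_j$ real and even in $x$, observe that this choice makes every intermediate conjugated operator $Q^{(k-2)}$ PT-symmetric \emph{for free}, and read off the required parity of the order-$h^k$ obstruction $r_k$ directly from $\overline{\sigma^w(Q^{(k-2)})(-x,\xi)}=\sigma^w(Q^{(k-2)})(x,\xi)$, without ever computing $r_k$. That abstract induction cleanly packages the paper's ``PT-symmetry is preserved, hence solvability holds'' observation into the inductive hypothesis, avoiding the heavy Moyal bookkeeping; your evenness check for the solution $g_{k-1}$ via $\sigma\circ\exp tH_{p_0}=\exp(-tH_{p_0})\circ\sigma$ (anti-symplecticity of $(x,\xi)\mapsto(-x,\xi)$ combined with the solvability identity) closes the loop and is the right argument, stated only implicitly in the paper. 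The trade-off is that the paper's explicit expansion is not wasted effort: it produces the closed-form $R_5$, $R_8$ needed to write down $S_2$ and $S_4$ in Theorem~\ref{Theo1}, whereas your streamlined induction establishes existence of $B$ but by itself does not extract those formulas. Your reading of ``$N=4$'' is also the right one: the induction has no new obstruction at higher order, and the restriction reflects only the availability of closed-form $S_j$'s from \cite{4}, \cite{9}.
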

\begin{proof}
To shorten the exposition, we content to the lower order accuracy $\mathcal{O}(h^{4})$.
First we carry BNF for the self-adjoint part $\tilde{P}=\displaystyle \frac{P+P^{*}}{2}$ of $P$, which has real Weyl symbol
and verifies PT symmetry. Since $U$ commutes with $\mathcal{P} \mathcal{T}$, the anti-self adjoint part $\displaystyle \frac{P-P^{*}}{2}$ also satisfies PT symmetry (but is not necessarily in BNF).

Check first the Proposition for $N=1$. Let $B_0$ have Weyl symbol $b_0$, which we write as $\sigma^{w}(B_{0})=b_0$.
Let $b_{0}=e^{\beta_{0}}$, with real $\beta_{0}$. By $h$-Pseudodifferential Calculus (i.e. Moyal product), $B_{0} P B_{0}^{-1}=[B_{0},P] B_{0}^{-1}+P$ has Weyl symbol
\begin{equation}\label{4}
\sigma^{W}\big(B_{0} P B_{0}^{-1}\big)=p-i\,h\{\beta_{0},p\}+\frac{h^{2}}{2}\,\big\{\{\beta_{0},p\},\beta_{0}\big\}+
i\,\frac{h^{3}}{8}\,R_{5}(\beta_{0},\alpha(p))+
\frac{h^{4}}{48}\,R_{8}(\beta_{0},\alpha(p))+O(h^{5})
\end{equation}
with
\begin{equation}\label{5}
\alpha(p)=\{\beta_{0},p\}
\end{equation}
and
\begin{align*}
\!\!\!R_{5}(\beta_{0},\alpha(p))&=\big((\partial_{\xi}\beta_{0})^{2}-\frac{\partial^{2}\beta_{0}}{\partial \xi^{2}}\big)\times\big(2\, \partial_{x}\alpha(p)\,\partial_{x} \beta_{0}+\alpha(p)\,\frac{\partial^{2}\beta_{0}}{\partial x^{2}}+\frac{\partial^{2} \alpha(p)}{\partial x^{2}}+\alpha(p) (\partial_{x} \beta_{0})^{2}\big)\ \\
&+\big((\partial_{x}\beta_{0})^{2}-\frac{\partial^{2}\beta_{0}}{\partial x^{2}}\big)\times \big(2\, \partial_{\xi}\alpha(p)\,\partial_{\xi} \beta_{0}+\alpha(p)\,\frac{\partial^{2}\beta_{0}}{\partial \xi^{2}}+\frac{\partial^{2} \alpha(p)}{\partial \xi^{2}}+\alpha(p) (\partial_{\xi} \beta_{0})^{2}\big)\ \\
&-2\big(\partial_{x}\beta_{0}\,\partial_{\xi}\beta_{0}-\frac{\partial^{2}\beta_{0}}{\partial x \partial \xi}\big)\times\big(\frac{\partial^{2}\alpha(p)}{\partial x \partial \xi}+\partial_{\xi}\alpha(p)\,\partial_{x}\beta_{0}+\partial_{x}\alpha(p)\,\partial_{\xi}\beta_{0}+
\alpha(p)\,\partial_{x}\beta_{0}\,\partial_{\xi}\beta_{0}+\alpha(p)\,\frac{\partial^{2}\beta_{0}}{\partial x \partial \xi}\big)
\end{align*}
\begin{align*}
R_{8}(\beta_{0},\alpha(p))&=F_{5}(\beta_{0},\alpha(p))\, \big(3\,\partial_{x}\beta_{0}\,\frac{\partial^{2}\beta_{0}}{\partial x^{2}}-
\frac{\partial^{3}\beta_{0}}{\partial x^{3}}-(\partial_{x} \beta_{0})^{3}
\big)\ \\
&-\tilde{F}_{5}(\beta_{0},\alpha(p))\,\big(3\,\partial_{\xi}\beta_{0}\,\frac{\partial^{2}\beta_{0}}{\partial \xi^{2}}-
\frac{\partial^{3}\beta_{0}}{\partial \xi^{3}}-(\partial_{\xi} \beta_{0})^{3}
\big)\ \\
&+3\,G_{5}(\beta_{0},\alpha(p))\,\big(2\,\partial_{\xi}\beta_{0}\,\frac{\partial^{2}\beta_{0}}{\partial x \partial \xi}-
\frac{\partial^{3}\beta_{0}}{\partial x \partial \xi^{2}}-\partial_{x} \beta_{0}\,(\partial_{\xi} \beta_{0})^{2}+\partial_{x} \beta_{0}\,\frac{\partial^{2}\beta_{0}}{\partial \xi^{2}}\big)\ \\
&-3\,\tilde{G}_{5}(\beta_{0},\alpha(p))\,\big(2\,\partial_{x}\beta_{0}\,\frac{\partial^{2}\beta_{0}}{\partial x \partial \xi}-
\frac{\partial^{3}\beta_{0}}{\partial x^{2} \partial \xi}-\partial_{\xi} \beta_{0}\,(\partial_{x} \beta_{0})^{2}+\partial_{\xi} \beta_{0}\,\frac{\partial^{2}\beta_{0}}{\partial x^{2}}\big)
\end{align*}
where
$$
F_{5}(\beta_{0},\alpha(p))=3\,\partial_{\xi}\alpha\,\frac{\partial^{2}\beta_{0}}{\partial \xi^{2}}+\alpha\,
\frac{\partial^{3}\beta_{0}}{\partial \xi^{3}}+3\,\partial_{\xi} \beta_{0}\,\frac{\partial^{2} \alpha}{\partial \xi^{2}}+\frac{\partial^{3}\alpha}{\partial \xi^{3}}+3\,\partial_{\xi} \alpha\,(\partial_{\xi} \beta_{0})^{2}+3\, \alpha\,\partial_{\xi} \beta_{0}\,\frac{\partial^{2}\beta_{0}}{\partial \xi^{2}}+\alpha\,(\partial_{\xi} \beta_{0})^{2}
$$
$$
\tilde{F}_{5}(\beta_{0},\alpha(p))=3\,\partial_{x}\alpha\,\frac{\partial^{2}\beta_{0}}{\partial x^{2}}+\alpha\,
\frac{\partial^{3}\beta_{0}}{\partial x^{3}}+3\,\partial_{x} \beta_{0}\,\frac{\partial^{2} \alpha}{\partial x^{2}}+\frac{\partial^{3}\alpha}{\partial x^{3}}+3\,\partial_{x} \alpha\,(\partial_{x} \beta_{0})^{2}+3\, \alpha\,\partial_{x} \beta_{0}\,\frac{\partial^{2}\beta_{0}}{\partial x^{2}}+\alpha\,(\partial_{x} \beta_{0})^{2}
$$
\begin{align*}\label{}
G_{5}(\beta_{0},\alpha(p))&=2\,\partial_{x}\beta_{0}\,\frac{\partial^{2}\alpha}{\partial x \partial \xi}+2\,\partial_{x}\alpha\,\frac{\partial^{2}\beta_{0}}{\partial x \partial \xi}+\partial_{\xi}\alpha\, \frac{\partial^{2}\beta_{0}}{\partial x^{2}}+\alpha\,\frac{\partial^{3}\beta_{0}}{\partial x^{2} \partial \xi}+\frac{\partial^{3}\alpha}{\partial x^{2} \partial \xi}\ \\
&+\partial_{\xi}\alpha\,(\partial_{x} \beta_{0})^{2}
+2 \alpha\,\partial_{x}\beta_{0}\,\frac{\partial^{2}\beta_{0}}{\partial x \partial \xi}
+\big(2\,\partial_{x}\alpha\,\partial_{x}\beta_{0}+\alpha\,\frac{\partial^{2}\beta_{0}}{\partial x^{2}}+\frac{\partial^{2}\alpha}{\partial x^{2}}+\alpha\,(\partial_{x}\beta_{0})^{2}\big)\,\partial_{\xi}\beta_{0}
\end{align*}
\begin{align*}
\tilde{G}_{5}(\beta_{0},\alpha(p))&=2\,\partial_{\xi}\beta_{0}\,\frac{\partial^{2}\alpha}{\partial x \partial \xi}+2\,\partial_{\xi}\alpha\,\frac{\partial^{2}\beta_{0}}{\partial x \partial \xi}+\partial_{x} \alpha\, \frac{\partial^{2}\beta_{0}}{\partial \xi^{2}}+\alpha\,\frac{\partial^{3}\beta_{0}}{\partial x \partial \xi^{2}}+\frac{\partial^{3}\alpha}{\partial x \partial \xi^{2}}+\partial_{x} \alpha\,(\partial_{\xi} \beta_{0})^{2}\ \\
&+2 \alpha\,\partial_{\xi} \beta_{0}\,\frac{\partial^{2}\beta_{0}}{\partial x \partial \xi}
+\big(2\,\partial_{\xi} \alpha\,\partial_{\xi}\beta_{0}+\alpha\,\frac{\partial^{2}\beta_{0}}{\partial \xi^{2}}+\frac{\partial^{2}\alpha}{\partial \xi^{2}}+\alpha\,(\partial_{\xi} \beta_{0})^{2}\big)\,\partial_{x} \beta_{0}
\end{align*}
Here $R_5(\beta_{0},\alpha(p_{0}))$ is a Hamilton-Jacobi polynomial with integer coefficients, polynomial in the derivatives of $\beta_{0}$ up to order 2, homogeneous of degree 5 (total degree in ($\partial_{x},\partial_{\xi}$)) when counting altogether products and derivatives; and similarly for $R_8(\beta_{0},\alpha(p_{0}))$.
Note that these Hamilton-Jacobi polynomials depend linearly on $\alpha(p)$. The first order term of the symbol is real iff
\begin{equation}\label{6}
\{\beta_{0},p_{0}\}=\mathrm{Im}(p_{1})
\end{equation}
and by Lemmas \ref{L1} and \ref{L2} this equation can be solved on $\gamma_{E}$, and
\begin{equation}\label{7}
\beta_{0}(s,\tau)=\displaystyle \int_{0}^{T(E)}(1-\frac{t}{T(E)})\,\mathrm{Im}(p_{1})(s+\omega(E)t,\tau)\,dt
\end{equation}
with
\begin{equation}\label{}
\omega(E)T(E)=2\,\pi
\end{equation}
We notice that $\beta_{0}(.,\tau)$ is an even function on the circle. So in (\ref{4}) we are left with
\begin{equation*}
\sigma^{W}(B_{0} P B_{0}^{-1})=p_{0}+h\,\mathrm{Re}(p_{1})+h^{2}\,\big(\mathrm{Re}(p_{2})-
\frac{1}{2}\,\big\{\{\beta_{0},p_{0}\},\beta_{0}\big\}\big)+\mathcal{O}(h^{3})
\end{equation*}
Chek now the  proposition for $N=2$. Let $B_1$ have Weyl symbol $\sigma^{W}(B_{1})=e^{h \beta_{1}}$ with $\beta_{1}$ real, and compute Weyl symbol of $B_{1} B_{0} P B_{0}^{-1} B_{1}^{-1}$.
Again by Moyal product, we get mod $\mathcal{O}(h^{3})$
\begin{equation}\label{9}
\!\!\!\sigma^{W}\big(B_{1} B_{0} P B_{0}^{-1} B_{1}^{-1}\big)
\equiv p_{0}+h \big(p_{1}-i\{\beta_{0},p_{0}\}\,)+h^{2}\,\big(p_{2}-i\{\beta_{0},p_{1}\}-i\{\beta_{1},p_{0}\}\\+
\frac{1}{2} \{\{\beta_{0},p_{0}\},\beta_{0}\}\big)
\end{equation}
The equation for $\beta_{1}$ reads
\begin{equation}\label{11}
\{\beta_{1},p_{0}\}=\mathrm{Im}(p_{2})-\{\beta_{0},\mathrm{Re}(p_{1})\}
\end{equation}
and we need to check the solvability condition (\ref{2}). It is fulfilled when $q=\mathrm{Im}(p_{2})$, since this is
an odd function on the circle; consider now $q=\{\beta_{0},\mathrm{Re}(p_{1})\}$, in action-angle coordinates we have
$\mathrm{Re}(p_{1})(t,\tau)=f_{1}(\tau)$, so $\{\beta_{0},\mathrm{Re}(p_{1})\}=-f'_{1}(\tau)\,\displaystyle \frac{\partial \beta_{0}}{\partial t}$. Since $\beta_{0}$ is $2\pi$-periodic
\begin{align*}
\int_{0}^{T(E)}\{\beta_{0},\mathrm{Re}(p_{1})\}(s+\omega(E)t,\tau)\,dt=-f'_{1}(\tau)\,(\beta_{0}(s+2 \pi,\tau)-\beta_{0}(s,\tau))=0
\end{align*}
so again the compatibility condition holds for $q=\{\beta_{0},\mathrm{Re}(p_{1})\}$, (\ref{11}) can be solved, and (\ref{9}) reduces to
\begin{equation}\label{12}
\sigma^{w}(B_{1} B_{0} P B_{0}^{-1} B_{1}^{-1})=p_{0}+h\,\mathrm{Re}(p_{1})+
h^{2}\,\big(\mathrm{Re}(p_{2})-\frac{1}{2}\,\big\{\{\beta_{0},p_{0}\},\beta_{0}\big\}
\,\big)+\mathcal{O}(h^{3})
\end{equation}
We notice that $\beta_{1}(.,\tau)$ is an even function on the circle.\ \\

Next we look for $B_{2}=Op^{W}(e^{h^{2} \beta_{2}})$, $\beta_{2}$ real so that $B_{2} B_{1} B_{0} P B_{0}^{-1} B_{1}^{-1} B_{2}^{-1}$ becomes self-adjoint up to $O(h^{3})$; the equation for $\beta_{2}$ reads
\begin{equation}\label{betha2}
\{\beta_{2},p_0\}=\mathrm{Im}(p_{3})-\sum_{k=0}^{1}\{\beta_{k},\mathrm{Re}(p_{2-k})\}+
\frac{1}{2}\,\{\{\beta_{0},\mathrm{Im}(p_{1})\},\beta_{0}\}+
\frac{1}{8}\,R_5(\beta_{0},\alpha(p_{0}))
\end{equation}
We need to check the compatibility condition for solving (\ref{betha2}), by the previous work it suffices to
consider $q=\{\{\beta_{0},\mathrm{Im}(p_{1})\},\beta_{0}\}$, and $q=R_5(\beta_{0},\alpha(p_{0}))$. Using again action-angle variables, we have: \begin{align*}
\!\!\{\{\beta_{0},\mathrm{Im}(p_{1})\},\beta_{0}\}&=
3 f''_{0}(\tau)\,(\frac{\partial \beta_{0}}{\partial t})^{2}\,\frac{\partial^{2} \beta_{0}}{\partial \tau \partial t}+f'_{0}(\tau)\,\frac{\partial \beta_{0}}{\partial t}\,(\frac{\partial^{2} \beta_{0}}{\partial \tau \partial t})^{2}
+f'_{0}(\tau)\,(\frac{\partial \beta_{0}}{\partial t})^{2}\,\frac{\partial^{3} \beta_{0}}{\partial \tau^{2} \partial t}+
f'''_{0}(\tau)\,(\frac{\partial \beta_{0}}{\partial t})^{3}\ \\
&-f'_{0}(\tau) \frac{\partial^{2} \beta_{0}}{\partial \tau^{2}} \frac{\partial \beta_{0}}{\partial t} \frac{\partial^{2} \beta_{0}}{\partial t^{2}}-2 f'_{0}(\tau) \frac{\partial \beta_{0}}{\partial \tau} \frac{\partial \beta_{0}}{\partial t} \frac{\partial^{3} \beta_{0}}{\partial \tau \partial t^{2}}
-3 f''_{0}(\tau) \frac{\partial \beta_{0}}{\partial \tau} \frac{\partial \beta_{0}}{\partial t} \frac{\partial^{2} \beta_{0}}{\partial t^{2}}+f'_{0}(\tau) (\frac{\partial \beta_{0}}{\partial \tau})^{2}\,\frac{\partial^{3} \beta_{0}}{\partial t^{3}}
\end{align*}
and the compatibility condition is fulfilled for that term since all functions to be integrated on the
circle are odd.

For $R_5(\beta_{0},\alpha(p_{0}))$ we proceed similarly. We check again that $\beta_{2}$ is an even function on the circle (i.e. under the transformation
$t\mapsto \pi-t$).\ \\
Once we know $\beta_{2}$, we compute $\sigma^{w}(B_{2} B_{1} B_{0} P B_{0}^{-1} B_{1}^{-1} B_{2}^{-1})$ mod $\mathcal{O}(h^{4})$, this gives:
\begin{align*}
\sigma^{W}(B_{2} B_{1} B_{0} P B_{0}^{-1} B_{1}^{-1} B_{2}^{-1} )&\equiv p_{0}+h\,\mathrm{Re}(p_{1})+h^{2}\,\big(\mathrm{Re}(p_{2})- \frac{1}{2}\,\big\{\{\beta_{0},p_{0}\},\beta_{0}\big\}
\,\big)\ \\
&+h^{3}\,\big(\mathrm{Re}(p_{3})- \big\{\{\beta_{1},p_{0}\},\beta_{0}\big\}-\frac{1}{2}\,
\big\{\{\beta_{0},\mathrm{Re}(p_{1})\},\beta_{0}\}\big)
\end{align*}
Let now $B_3$ have Weyl symbol $\sigma^{W}(B_{3})=e^{h^{3} \beta_{3}}$ with $\beta_{3}$ real, and compute Weyl symbol of $B_{3} B_{2} B_{1} B_{0} P B_{0}^{-1} B_{1}^{-1} B_{2}^{-1} B_{3}^{-1}$.
Again by Moyal product, we get mod $\mathcal{O}(h^{5})$
\begin{align*}
\!\!\!\!\!\!\!\sigma^{w}\big(B_{3} B_{2} B_{1} B_{0} P B_{0}^{-1} B_{1}^{-1} B_{2}^{-1} B_{3}^{-1}\big)&\equiv p-ih \sum_{j=0}^{3}h^{j} \{\beta_{j},p\}-h^{3}\,
\sum_{j=0}^{1}h^{j}\big\{\beta_{j+1},\{\beta_{0},p\}\big\}
+\frac{h^{2}}{2} \sum_{j=0}^{1}h^{2j}\big\{\{\beta_{j},p\},\beta_{j}\big\}\ \\
&+i\,\frac{h^{3}}{8}\,R_{5}(\beta_{0},\alpha(p))
+\frac{h^{4}}{48}\,R_{8}(\beta_{0},\alpha(p))
-i\,\frac{h^{4}}{2}\,\big\{\beta_{1},\big\{\{\beta_{0},p\},\beta_{0}\big\}
\big\}
\end{align*}
The equation for $\beta_3$ now reads
$$
\{\beta_{3},p_{0}\}=\mathrm{Im}(p_{4})- \sum_{k=0}^{2}\{\beta_{k},\mathrm{Re}(p_{3-k})\}+
\frac{1}{2}\,\{\{\beta_{0},\mathrm{Im}(p_{2})\},\beta_{0}\}
-\frac{1}{2}\,\big\{\big\{ \{\beta_{0},p_{0}\},\beta_{0}\big\},\beta_{1}\big\}
+\frac{1}{8}\,R_5\big(\,\beta_{0},\alpha\big(\mathrm{Re}(p_{1})\big)\big)\label{}
$$
The compatibility condition is fulfilled for $\mathrm{Im}(p_{4})$ and $\{\beta_{k},\text{Re}(p_{3-k})\}, 0\leq k\leq 2$ as before, for the double Poisson bracket $\{\{\beta_{0},\mathrm{Im}(p_{2})\},\beta_{0}\}$, the triple Poisson bracket $\big\{\big\{\{\beta_{0},p_{0}\},\beta_{0}\big\},\beta_{1}\big\}$ and $R_5\big(\,\beta_{0},\alpha\big(\mathrm{Re}(p_{1})\big)\,\big)$ we proceed similarly. We deduce $S_3(E)=0$.

Once we know $\beta_{3}$, we compute $\sigma^{w}(B_{3} B_{2} B_{1} B_{0} P B_{0}^{-1} B_{1}^{-1} B_{2}^{-1} B_{3}^{-1})$ mod $\mathcal{O}(h^{5})$
this gives:
\begin{align*}
\!\!\!\!\!\!\sigma^{w}(B_{3} B_{2} B_{1} B_{0} P B_{0}^{-1} B_{1}^{-1} B_{2}^{-1} B_{3}^{-1})&= p_{0}+h\,\mathrm{Re}(p_{1})+h^{2}\,\big(\mathrm{Re}(p_{2})-
\frac{1}{2}\,\{\{\beta_{0},p_{0}\},\beta_{0}\}\big)\ \\
&+h^{3}\,\big(\mathrm{Re}(p_{3})- \big\{\{\beta_{1},p_{0}\},\beta_{0}\big\}
-\frac{1}{2}\,\big\{\{\beta_{0},\mathrm{Re}(p_{1})\},\beta_{0}\big\}\big)\ \\
&+h^{4} \big(\mathrm{Re}(p_{4})+\sum_{k=0}^{1}\{\beta_{k},\mathrm{Im}(p_{3-k})\}+
\frac{1}{2} \{\{\beta_{0},\mathrm{Re}(p_{2})\},\beta_{0}\}+
\frac{1}{2} \{\{\beta_{1},p_{0}\},\beta_{1}\}\big)\ \\
&+h^{4}\,\big(\big\{\{\beta_{0},\mathrm{Re}(p_{1})\},\beta_{1}\big\}- \frac{1}{8}\,R_5\big(\beta_{0},\alpha\big(\mathrm{Re}(p_{1})\big)\big)+
\frac{1}{48}\,R_8\big(\beta_{0},\alpha(p_{0})\big)\big)
\end{align*}
\end{proof}
To prove the Theorem, we observe eventually that the knowledge of the symbol of $BPB^{-1}$ mod $\mathcal{O}(h^{5})$
(\cite{9},Formula(7.3)) is sufficient to compute $S_4(E)$ (although this formula was derived in the
particular case where the symbol of $P$ contains only $p_0$).
\section{Extension to operators with periodic coefficients}
As in \cite{5}, we replace $T^{*}\mathbb{R}$ by $T^{*}\mathbb{S}^{1}$, and the hypothesis on $P$ by the following:
\begin{description}
\item[$\bullet$] there is a topological ring $\mathcal{A}$, homotopic to the zero section of $T^{*}\mathbb{S}^{1}$, such that $\partial \mathcal{A}=\mathcal{A}_{-}\cup \mathcal{A}_{+}$ with $\mathcal{A}_{\pm}$ a
connected component of $p_{0}^{-1}(E_{\pm})$.
\item[$\bullet$]$p_0$ has no critical points in $\mathcal{A}$.
\item[$\bullet$]$\mathcal{A}$ is "below" $\mathcal{A}_{+}$.
\end{description}
Then Theorem \ref{Theo1} holds; the only change is that $S_{1}(E)=0$. Again we reduce $P$ to $f(hD_{t};h)$ as an operator on $\mathbb{S}^{1}$.\ \\
\textit{Acknowledgments}:
The second author (N.Mhadhbi) acknowledges with thanks the Deanship
of Scientific Research DSR, King Abdulaziz University (Jeddah) for its support. The third author
(M.Rouleux) cheerfully thanks S.Dobrokhotov, for his kind hospitality at Ishlinskiy Institute for Problems
of Mechanics (Moscow).

\end{document}